\newtheorem{theorem}{Theorem}
\newtheorem{lemma}[theorem]{Lemma}
\newtheorem*{wmattreethm}{The Weighted Matrix-Tree Theorem}
\newtheorem*{matrixdetlemma}{The Matrix Determinant Lemma}
\DeclareMathOperator{\adj}{adj}
\newcommand{\vu}{\mathbf{a}}
\newcommand{\vv}{\mathbf{b}}
\DeclareMathOperator{\indeg}{indeg}
\DeclareMathOperator{\outdeg}{outdeg}
\begin{document}

%\begin{frontmatter}
\title{Linear Algebraic Techniques for Weighted Spanning Tree Enumeration}
%\markright{Counting Spanning Trees}
%\author{Steven Klee}
\address{Seattle University, Department of Mathematics \\ 901 12th Avenue, Seattle, WA 98122, USA}
%\ead{klees@seattleu.edu}

%\author{Matthew T. Stamps}
\address{Yale-NUS College, Division of Science \\ 10 College Avenue West, \#01-101 Singapore 138609}
%\ead{matt.stamps@yale-nus.edu.sg}

\markright{Weighted Spanning Tree Enumeration}
\author{Steven Klee and Matthew T. Stamps}

\maketitle

\begin{abstract}
The weighted spanning tree enumerator of a graph $G$ with weighted edges is the sum of the products of edge weights over all the spanning trees in $G$.  In the special case that all of the edge weights equal $1$, the weighted spanning tree enumerator counts the number of spanning trees in $G$.  The Weighted Matrix-Tree Theorem asserts that the weighted spanning tree enumerator can be calculated from the determinant of a reduced weighted Laplacian matrix of $G$.  That determinant, however, is not always easy to compute.  In this paper, we show how two well-known results from linear algebra, the Matrix Determinant Lemma and the method of Schur complements, can be used to elegantly compute the weighted spanning tree enumerator for several families of graphs.
\end{abstract}

%\begin{keyword}
%Matrix-Tree Theorem \sep Matrix Determinent Lemma \sep Schur Complements \sep Weighted Spanning Trees
%\end{keyword}

%\end{frontmatter}

%%%%%%%%%%%%%%%%%%%%%%
%%%%%%%%%%%%%%%%%%%%%%
\section{Introduction}
%%%%%%%%%%%%%%%%%%%%%%
%%%%%%%%%%%%%%%%%%%%%%

In this paper, we restrict our attention to finite simple graphs.  We will assume each graph has a finite number of vertices and does not contain any loops or multiple edges. We will denote the vertex set and edge set of a graph $G$ by $V(G)$ and $E(G)$, respectively.  A function $\omega: V(G) \times V(G) \rightarrow \mathbb{R}$, whose values are denoted by $\omega_{i,j}$ for $v_i,v_j \in V(G)$, is an \textbf{edge weighting} on $G$ if $\omega_{i,j} = \omega_{j, i}$ for all $v_i, v_j \in V(G)$ and if $\omega_{i,j} = 0$ for all $\{v_i,v_j\} \notin E(G)$.  The \textbf{weighted degree} of a vertex $v_i \in V(G)$ is the value $$\deg(v_i;\omega) = \sum_{v_j \in V(G)} \omega_{i,j} = \sum_{v_j \, : \, \{v_i,v_j\} \in E(G)} \omega_{i,j}.$$
For a matrix $M$, we will use $M(i,j)$ to denote its entry in row $i$ and column $j$ and $M_{i,j}$ to denote the submatrix of $M$ obtained by crossing out its $i^{\text{th}}$ row and $j^{\text{th}}$ column. 

Let $G$ be a graph whose edges are weighted by a function $\omega$.  The \textbf{weighted Laplacian matrix}, $L(G;\omega)$, is a matrix whose rows and columns are indexed by the vertices of $G$ and whose entries are given by
$$
L(G;\omega)(i,j) = 
\begin{cases}
\deg(v_i;\omega) & \text{ if } i = j, \\
-\omega_{i,j} & \text{ if } i \neq j.
\end{cases}
$$

 The weighted Laplacian matrix is singular because its rows sum to zero; however, its cofactors have a beautiful combinatorial interpretation in terms of spanning trees.  

\begin{wmattreethm}[\cite{Kirchhoff,Maxwell}]
Let $G$ be a graph with $V(G) = \{v_1,\ldots,v_n\}$ and let $\omega$ be an edge weighting on $G$.  For any vertices $v_i,v_j \in V(G)$, not necessarily distinct, 
\begin{equation} \label{weighted-mtt-eqn}
\sum_{T \in \mathcal{ST}(G)} \prod_{\{v_k,v_\ell\} \in E(T)}\omega_{k,\ell}  = (-1)^{i+j}\det \left(L(G;\omega)_{i,j}\right),
\end{equation}
where $\mathcal{ST}(G)$ is the set of all spanning trees of $G$.
\end{wmattreethm}

The quantity on the left side of Equation \eqref{weighted-mtt-eqn} is called the \textbf{weighted spanning tree enumerator} for the edge-weighted graph $(G;\omega)$, which we will denote by $\tau(G;\omega)$. Note that when $\omega_{k,\ell} = 1$ for each edge in $G$, $L(G;\omega)$ is the ordinary Laplacian matrix and $\tau(G;\omega)$ counts the number of spanning trees in $G$. Thus, the Weighted Matrix-Tree Theorem is a more general version of the commonly cited Matrix-Tree Theorem.  

As succinct as the Weighted Matrix-Tree Theorem is, computing the weighted spanning tree enumerator from the Laplacian matrix often requires algebraic insights (even for common families of graphs like complete graphs).  For instance, the choice of which row and column to remove can significantly impact the complexity of the resulting determinant. In this paper, we show how two elementary tools from linear algebra -- the Matrix Determinant Lemma and the method of Schur complements --  can be used to elegantly compute the weighted spanning tree enumerator for several well-studied families of graphs without having to eliminate rows or columns.  While the results themselves are not new, we believe the technique is beautiful and worth recording.  Specifically, we give new proofs of the following results: 
\begin{enumerate}
\item the Cayley-Pr\"ufer Theorem, which gives a formula for the weighted spanning tree enumerator for complete graphs \cite{Moon};
\item a generalization of the Cayley-Pr\"ufer Theorem to weighted complete multipartite graphs, a result originally given by Clark \cite{Clark};
\item the weighted spanning tree enumerator for Ferrers graphs, a result originally given by Ehrenborg and van Willigenburg \cite{E-VW}; and
\item the weighted spanning tree enumerator for threshold graphs, a result originally given by Martin and Reiner \cite{Martin-Reiner}.
\end{enumerate}

In all of these cases, the weights on the edges of each graph $G$ have the form $\omega_{i,j} = x_ix_j$ for some indeterminates $\{x_i \ : \ v_i \in V(G)\}$ so the weighted spanning tree enumerator can be viewed as the  polynomial $$\sum_{T \in \mathcal{ST}(G)} \prod_{v_i \in V(G)} x_i^{\deg_T(v_i)}$$ in these variables.  This form is essential because the weights can be encoded in a rank-one matrix that can be added to the weighted Laplacian matrix so that the resulting determinant can be understood with the help of the Matrix Determinant Lemma.  For Ferrers graphs and threshold graphs, we will see that this linear algebraic approach allows us to easily reduce the weighted spanning tree enumerator to the determinant of an upper triangular matrix.

The rest of the paper is structured as follows.  In Section \ref{section:tools} we review the Matrix Determinant Lemma and Schur complement.  We then prove our main result, Lemma \ref{rank-one-update} and immediately show how it can be used to give a quick proof of the Cayley Pr\"ufer theorem, along with a generalization to complete multipartite graphs.  In Section \ref{section:ferrers}, we use Lemma \ref{rank-one-update} to calculate the weighted spanning tree enumerator for Ferrers graphs.  In Section \ref{section:threshold}, we use Lemma \ref{rank-one-update} to calculate the weighted spanning tree enumerator for threshold graphs.  The arguments in Sections \ref{section:ferrers} and \ref{section:threshold} may seem a bit lengthy, but that is simply because we have chosen to carefully describe the entries of several matrices at each step.  The proofs themselves use nothing more than elementary matrix multiplication.  Proofs of the unweighted versions of the results in this paper, which are especially concrete and concise, are presented in \cite{Klee-Stamps-Monthly}.

%%%%%%%%%%%%%%%%%%%%%
%%%%%%%%%%%%%%%%%%%%%
\section{Tools from linear algebra}\label{section:tools}
%%%%%%%%%%%%%%%%%%%%%
%%%%%%%%%%%%%%%%%%%%%

In this section, we review two well-known results in linear algebra and present our main result, Lemma \ref{rank-one-update}, whose applicability we will demonstrate in subsequent sections of the paper.  The first linear algebraic result we require is the Matrix Determinant Lemma. Recall that the \textbf{adjugate} of an $n \times n$ matrix is the transpose of its $n \times n$ matrix of cofactors.

\begin{matrixdetlemma}
Let $M$ be an $n \times n$ matrix and let $\vu$ and $\vv$ be column vectors in $\mathbb{R}^n$.  Then $$\det(M+\vu\vv^T) = \det(M) + \vv^T\adj(M)\vu. $$  In particular, if $M$ is invertible, then $\det(M+\vu\vv^T) = \det(M)\left(1+\vv^TM^{-1}\vu\right)$.
\end{matrixdetlemma}

A proof of the Matrix Determinant Lemma can be found in the textbook of Horn and Johnson \cite[\S0.8.5]{Horn-Johnson}, where it is referred to as \emph{Cauchy's formula for the determinant of a rank-one perturbation}.  The main ingredients in the proof are the fact that $\det(\cdot)$ is a multilinear operator on the rows of a matrix and the observation that $\vu\vv^T$ is a rank-one matrix. 

We are now ready to prove our main result.

\begin{lemma} \label{rank-one-update}
Let $G$ be a graph on vertex set $V$ whose edges are weighted by a function $\omega$. Let $L$ be the weighted Laplacian matrix of $G$, and let $\mathbf{a} = (a_i)_{v_i \in V}$ and $ \mathbf{b} = (b_i)_{v_i \in V}$ be column vectors in $\mathbb{R}^V$.  Then $$\det(L + \mathbf{a}\mathbf{b}^T) = \Big(\sum_{v_i \in V} a_i \Big) \cdot \Big(\sum_{v_i \in V} b_i \Big) \cdot \tau(G;\omega).$$
\end{lemma}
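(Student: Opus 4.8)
The plan is to apply the Matrix Determinant Lemma directly, with $M = L$, $\vu = \mathbf{a}$, and $\vv = \mathbf{b}$, which gives
$$\det(L + \mathbf{a}\mathbf{b}^T) = \det(L) + \mathbf{b}^T\adj(L)\mathbf{a}.$$
The first term vanishes: since every row of a weighted Laplacian matrix sums to zero, $L$ is singular and $\det(L) = 0$. So the whole statement reduces to evaluating the bilinear form $\mathbf{b}^T\adj(L)\mathbf{a}$.

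Next I would expand this form entrywise as
$$\mathbf{b}^T\adj(L)\mathbf{a} = \sum_{v_i \in V}\sum_{v_j \in V} b_i \,\adj(L)(i,j)\, a_j,$$
and then rewrite each entry of the adjugate in terms of cofactors. Because the adjugate is the transpose of the cofactor matrix, $\adj(L)(i,j) = (-1)^{i+j}\det(L_{j,i})$. The key observation is that the Weighted Matrix-Tree Theorem asserts Equation \eqref{weighted-mtt-eqn} for \emph{every} pair of indices, not merely when $i = j$; applying it with the roles of the two indices as needed shows that $(-1)^{i+j}\det(L_{j,i}) = \tau(G;\omega)$ for all $i$ and $j$. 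Hence every entry of $\adj(L)$ equals the same scalar $\tau(G;\omega)$.

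Substituting this back in, I would obtain
$$\mathbf{b}^T\adj(L)\mathbf{a} = \tau(G;\omega)\sum_{v_i\in V}\sum_{v_j\in V} b_i a_j = \tau(G;\omega)\Big(\sum_{v_i\in V} b_i\Big)\Big(\sum_{v_j\in V} a_j\Big),$$
which is exactly the claimed identity once combined with $\det(L) = 0$.

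I do not expect a serious obstacle: the argument is short and uses only the two facts just cited. The one place that calls for care is the index bookkeeping in the middle step — keeping the row/column conventions of the adjugate (transpose of cofactors, with the paper's convention that $L_{i,j}$ denotes the submatrix obtained by deleting row $i$ and column $j$) consistent with the indices appearing in Equation \eqref{weighted-mtt-eqn}, and making explicit that it is precisely the full strength of the Weighted Matrix-Tree Theorem, namely its validity for all $i$ and $j$ including $i \neq j$, that forces every entry of $\adj(L)$ to collapse to the single value $\tau(G;\omega)$. Everything else is direct substitution.
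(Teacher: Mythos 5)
Your proposal is correct and follows essentially the same route as the paper's own proof: apply the Matrix Determinant Lemma with $M = L$, note $\det(L) = 0$, and use the full strength of the Weighted Matrix-Tree Theorem (valid for all index pairs, including $i \neq j$) to conclude $\adj(L) = \tau(G;\omega)\,\mathbf{1}_V\mathbf{1}_V^T$, after which the bilinear form factors. Your added care about the adjugate's row/column conventions is consistent with the paper's conventions and introduces no error.
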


\begin{proof}
Let $\mathbf{1}_{V, V}$ denote the $|V| \times |V|$ matrix of ones, and let $\mathbf{1}_V$ denote the $|V| \times 1$ vector of ones. By the Weighted Matrix-Tree Theorem, every cofactor of $L$ equals $ \tau(G;\omega)$, so $$\adj(L) = \tau(G;\omega) \mathbf{1}_{V, V} =  \tau(G;\omega) \mathbf{1}_V \mathbf{1}_V^T.$$ Therefore, by the Matrix Determinant Lemma, 
\begin{eqnarray*}
\det(L + \mathbf{a}\mathbf{b}^T) &=& \det(L) + \mathbf{b}^T \adj(L) \mathbf{a} \\
&=& 0 + \mathbf{b}^T\left(\tau(G;\omega) \mathbf{1}_V \mathbf{1}_V^T\right)\mathbf{a} \\
&=& \left(\mathbf{b}^T\mathbf{1}_V\right)\left( \mathbf{1}_V^T\mathbf{a}\right) \cdot  \tau(G;\omega)\\
&=& \Big(\sum_{v_i \in V} a_i \Big) \cdot \Big(\sum_{v_i \in V} b_i \Big) \cdot  \tau(G;\omega).
\end{eqnarray*}
\end{proof}

In the case that $G$ is unweighted (meaning $\omega_{ij} = 1$ for each edge) and $\vu = \vv = \mathbf{1}_V$, this result simplifies to say $\det(L+\mathbf{1}_{V, V})$ is equal to $|V(G)|^2$ times the number of spanning trees in $G$.  This result was originally given by Temperley \cite{Temperley}, who computed the determinant of $L+\vu\vv^T$ via elementary row operations. 

As an immediate application of this result, we give a simple proof of the Cayley-Pr\"ufer Theorem.

\begin{theorem}
Let $x_1, \ldots,x_n$ be indeterminates and suppose the edges of the complete graph $K_n$ are weighted by $\omega_{ij} = x_ix_j$.  Then the weighted spanning tree enumerator for $K_n$ is given by 
$$
\tau(K_n,\omega)= x_1 \cdots x_n (x_1+\cdots+x_n)^{n-2}.
$$
\end{theorem}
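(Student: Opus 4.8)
The plan is to apply Lemma~\ref{rank-one-update} with a carefully chosen rank-one update that encodes the edge weights $\omega_{ij} = x_ix_j$. First I would record the weighted Laplacian of $K_n$: its diagonal entry in row $i$ is $\deg(v_i;\omega) = x_i\sum_{j \neq i} x_j = x_i(s - x_i)$, where $s = x_1 + \cdots + x_n$, and its off-diagonal entry in row $i$, column $j$ is $-x_ix_j$. The key observation is that $L(K_n;\omega) + D$ is itself a rank-one matrix, where $D$ is the diagonal matrix with entries $x_i^2$; indeed $\big(L(K_n;\omega) + D\big)(i,j) = x_i x_j$ for all $i,j$, so $L(K_n;\omega) + D = \mathbf{x}\mathbf{x}^T$ with $\mathbf{x} = (x_1,\ldots,x_n)^T$. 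Rearranging, $L(K_n;\omega) = \mathbf{x}\mathbf{x}^T - D$, which is not quite of the form $L + \mathbf{a}\mathbf{b}^T$, so instead I would work the other direction: take $\mathbf{a} = \mathbf{b} = \mathbf{x}$ in Lemma~\ref{rank-one-update} to get
\begin{equation*}
\det\big(L(K_n;\omega) + \mathbf{x}\mathbf{x}^T\big) = \Big(\sum_i x_i\Big)^2 \tau(K_n;\omega) = s^2\,\tau(K_n;\omega).
\end{equation*}

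Next I would compute the left-hand side directly. By the identity above, $L(K_n;\omega) + \mathbf{x}\mathbf{x}^T = 2\mathbf{x}\mathbf{x}^T - D$. Hmm --- that is the sum of a rank-one matrix and a diagonal, but not in the cleanest form. A cleaner route: observe that $L(K_n;\omega) + \mathbf{x}\mathbf{x}^T$ has $(i,j)$ entry equal to $x_ix_j$ for $i \neq j$ and $x_i(s-x_i) + x_i^2 = x_i s$ on the diagonal. So this matrix equals $D' - D + \mathbf{x}\mathbf{x}^T$ is getting complicated; the clean statement is that $L(K_n;\omega) + \mathbf{x}\mathbf{x}^T = \mathrm{diag}(x_1 s,\ldots,x_n s) - D + \mathbf{x}\mathbf{x}^T$ where the diagonal of $-D + \mathbf{x}\mathbf{x}^T$ is zero, i.e. it equals $s\cdot\mathrm{diag}(x_i) - D + \mathbf{x}\mathbf{x}^T$. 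I would instead factor out: write $L(K_n;\omega) + \mathbf{x}\mathbf{x}^T = \mathrm{diag}(x_i)\big(sI - \mathrm{diag}(x_i) + \mathbf{1}\mathbf{x}^T\big)$, since multiplying row $i$ of $sI - \mathrm{diag}(x_i) + \mathbf{1}\mathbf{x}^T$ by $x_i$ gives exactly the desired entries. Thus
\begin{equation*}
\det\big(L(K_n;\omega) + \mathbf{x}\mathbf{x}^T\big) = x_1\cdots x_n \cdot \det\big((sI - \mathrm{diag}(x_i)) + \mathbf{1}\mathbf{x}^T\big),
\end{equation*}
and now the Matrix Determinant Lemma applies to the matrix $M = sI - \mathrm{diag}(x_i)$, which is diagonal with entries $s - x_i$, together with the rank-one update $\mathbf{1}\mathbf{x}^T$.

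Then I would finish the computation. If all $s - x_i$ are nonzero, $\det(M) = \prod_i(s-x_i)$ and $\mathbf{x}^T M^{-1}\mathbf{1} = \sum_i \frac{x_i}{s - x_i}$, so the Matrix Determinant Lemma gives $\det(M + \mathbf{1}\mathbf{x}^T) = \prod_i(s-x_i)\big(1 + \sum_i \frac{x_i}{s-x_i}\big)$. Clearing denominators, $\prod_i(s-x_i) + \sum_i x_i \prod_{j \neq i}(s - x_j)$; I expect this to collapse to $s^{n-1}$, which can be checked by noting it equals $\frac{d}{dt}\big[t\prod_i(t - x_i)\big]$ evaluated appropriately, or simply by observing that the polynomial identity $\prod_i(s-x_i) + \sum_i x_i\prod_{j\neq i}(s-x_j) = s \cdot s^{n-2}\cdot(\text{something})$; more carefully, the sum $\prod(s-x_i) + \sum_i x_i\prod_{j\ne i}(s-x_j)$ telescopes because $x_i = s - (s - x_i)$, giving $\sum_i s\prod_{j\ne i}(s-x_j) - (n-1)\prod_i(s-x_i) + \cdots$; I will verify the clean outcome $= s^{n-1}$ by the substitution $y_i = s - x_i$, so $\sum y_i = (n-1)s$ and the expression becomes $\prod y_i + \sum_i (s - y_i)\prod_{j\ne i} y_j = \prod y_i + s\sum_i \prod_{j\ne i}y_j - n\prod y_i$, which is a symmetric function identity I would expand and simplify. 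Combining, $\det\big(L(K_n;\omega) + \mathbf{x}\mathbf{x}^T\big) = x_1\cdots x_n\, s^{n-1}$, and dividing by $s^2$ yields $\tau(K_n;\omega) = x_1\cdots x_n\, s^{n-2}$. To handle the case where some $s - x_i = 0$, I would note that both sides are polynomials in the $x_i$ and invoke the fact that a polynomial identity valid on a Zariski-dense set holds identically. The main obstacle is the final symmetric-function simplification showing the bracketed determinant equals $s^{n-1}$; everything else is bookkeeping. An alternative that sidesteps this is to compute $\det(M + \mathbf{1}\mathbf{x}^T)$ with the other form of the Matrix Determinant Lemma, $\det(M) + \mathbf{x}^T\adj(M)\mathbf{1}$, which avoids dividing by $s - x_i$ entirely.
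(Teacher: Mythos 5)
Your overall strategy --- apply Lemma~\ref{rank-one-update} with $\mathbf{a}=\mathbf{b}=\mathbf{x}=(x_1,\ldots,x_n)^T$ and then evaluate $\det(L+\mathbf{x}\mathbf{x}^T)$ directly --- is exactly the paper's, but the evaluation goes wrong because of a sign error that manufactures all of the difficulty you then wrestle with. Writing $s=x_1+\cdots+x_n$: the off-diagonal entries of $L$ are $-x_ix_j$ and those of $\mathbf{x}\mathbf{x}^T$ are $+x_ix_j$, so they \emph{cancel}; that is, $(L+\mathbf{x}\mathbf{x}^T)(i,j)=0$ for $i\neq j$, not $x_ix_j$ as you assert. (Your earlier ``key observation'' that $L+\mathrm{diag}(x_i^2)=\mathbf{x}\mathbf{x}^T$ is likewise false; the correct identity is $L=\mathrm{diag}(x_1s,\ldots,x_ns)-\mathbf{x}\mathbf{x}^T$.) Hence $L+\mathbf{x}\mathbf{x}^T$ is simply the diagonal matrix with entries $x_is$, its determinant is $x_1\cdots x_n\,s^n$, and dividing by $s^2$ finishes the proof with no further work.

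Because of the sign slip, the matrix you actually factor as $\mathrm{diag}(x_i)\bigl(sI-\mathrm{diag}(x_i)+\mathbf{1}\mathbf{x}^T\bigr)$ (diagonal $x_is$, off-diagonal $+x_ix_j$) is not $L+\mathbf{x}\mathbf{x}^T$, and the symmetric-function identity you hope will save the computation, $\prod_i(s-x_i)+\sum_i x_i\prod_{j\neq i}(s-x_j)=s^{n-1}$, is false: for $n=2$ the left side is $x_1x_2+x_1^2+x_2^2$ while the right side is $x_1+x_2$. There is also an internal inconsistency at the end --- even granting your value $\det(L+\mathbf{x}\mathbf{x}^T)=x_1\cdots x_n\,s^{n-1}$, dividing by $s^2$ would yield $x_1\cdots x_n\,s^{n-3}$ rather than the claimed $x_1\cdots x_n\,s^{n-2}$; the determinant has to come out to $x_1\cdots x_n\,s^{n}$ for the theorem to follow. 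Fix the sign and your argument collapses to the paper's two-line proof.
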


\begin{proof}
Let $L$ denote the weighted Laplacian matrix of $K_n$.  Then the entries of $L$ satisfy 
$$
L(i,j) = \begin{cases}
x_i \cdot \sum_{k \neq i} x_k & \text{ if } i = j, \\
 -x_ix_j & \text{ if } i \neq j
\end{cases}
$$
Consider the vector $\mathbf{x} = (x_1,\ldots,x_n)^T$.  Note that $L + \mathbf{x} \mathbf{x}^T$ is a diagonal matrix whose $i^{\text{th}}$ diagonal entry is $x_i(x_1+\cdots+x_n)$.  Therefore, by Lemma~\ref{rank-one-update}, 
%\begin{eqnarray*}
%(x_1+x_2+\cdots+x_n)^2\sum_{T \in \mathcal{T}(K_n)}\prod_{ij \in E(T)} w_{i,j} &=& \det(L + \mathbf{u} \mathbf{u}^T) \\
\begin{eqnarray*}
(x_1+\cdots+x_n)^2\cdot \tau(K_n,\omega)  &=& \det(L + \mathbf{x} \mathbf{x}^T) \\
&=& \prod_{i=1}^n x_i(x_1+\cdots+x_n).
\end{eqnarray*}

\end{proof}

This technique can also be applied to calculate weighted spanning tree enumerators for complete multipartite graphs.  Let $G$ be the complete multipartite graph $K_{n_1,\ldots,n_k}$ with its vertex set partitioned as $V_1 \sqcup \cdots \sqcup V_k$.  Let $n = n_1 + \cdots + n_k$ and order the vertices of $G$ so that $v_1, \ldots, v_{n_1}$ are the vertices in $V_1$, $v_{n_1+1}, \ldots, v_{n_1+n_2}$ are the vertices in $V_2$, and so on.  Clark \cite{Clark} originally proved the following result. 

\begin{theorem} \label{complete-multipartite}
Let $G$ be the complete multipartite graph with $n$ vertices and $k$ parts described above, let $x_1, \ldots, x_n$ be indeterminates, and suppose the edges of $G$ are weighted by $\omega_{i,j} = x_ix_j$.  Then the weighted spanning tree enumerator for $G$  is given by 
$$
\tau(G,\omega) = \left( \prod_{i=1}^n x_i \right)  \cdot \Bigg(\prod_{\ell=1}^k \Big(\sum_{v_j \notin V_\ell} x_j\Big)^{n_\ell-1}\Bigg) \cdot \left( \sum_{i=1}^n x_i \right)^{k-2}.
$$
\end{theorem}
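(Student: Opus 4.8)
The plan is to apply Lemma~\ref{rank-one-update} with the rank-one update $\mathbf{a} = \mathbf{b} = \mathbf{x} = (x_1, \ldots, x_n)^T$, just as in the proof of the Cayley-Pr\"ufer Theorem. The lemma immediately gives
$$
\Big(\sum_{i=1}^n x_i\Big)^2 \cdot \tau(G,\omega) = \det\big(L + \mathbf{x}\mathbf{x}^T\big),
$$
where $L$ is the weighted Laplacian of the complete multipartite graph. So the entire problem reduces to evaluating this single determinant, and the desired formula will follow once we show
$$
\det\big(L + \mathbf{x}\mathbf{x}^T\big) = \Big(\prod_{i=1}^n x_i\Big) \cdot \Bigg(\prod_{\ell=1}^k \Big(\sum_{v_j \notin V_\ell} x_j\Big)^{n_\ell - 1}\Bigg) \cdot \Big(\sum_{i=1}^n x_i\Big)^k.
$$

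First I would write down the entries of $M := L + \mathbf{x}\mathbf{x}^T$ explicitly. For the complete multipartite graph, $\omega_{i,j} = x_i x_j$ exactly when $v_i$ and $v_j$ lie in different parts, so $\deg(v_i;\omega) = x_i \sum_{v_j \notin V_{\ell(i)}} x_j$ where $\ell(i)$ is the index of the part containing $v_i$. Adding $\mathbf{x}\mathbf{x}^T$ cancels the $-x_i x_j$ off-diagonal entries whenever $v_i, v_j$ are in different parts, and introduces $x_i x_j$ off-diagonal entries when $v_i, v_j$ lie in the \emph{same} part. Writing $s = \sum_{i=1}^n x_i$ and $s_\ell = \sum_{v_j \in V_\ell} x_j$, one checks that $M$ is block diagonal with one block $M^{(\ell)}$ for each part $V_\ell$: the block $M^{(\ell)}$ has diagonal entries $x_i(s - s_\ell) + x_i^2 = x_i(s - s_\ell + x_i)$ for $v_i \in V_\ell$ and off-diagonal entries $x_i x_j$ for distinct $v_i, v_j \in V_\ell$. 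Factoring $x_i$ out of the $i$-th row of $M^{(\ell)}$ pulls out $\prod_{v_i \in V_\ell} x_i$ and leaves a matrix whose $(i,j)$ entry is $x_j$ off the diagonal and $s - s_\ell + x_i$ on the diagonal; that is, it is $(s - s_\ell) I + \mathbf{1}\,\mathbf{x}_\ell^T$ where $\mathbf{x}_\ell$ is the vector of $x$-values for the part $V_\ell$.

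The main computational step is then to evaluate $\det\big((s - s_\ell) I + \mathbf{1}\,\mathbf{x}_\ell^T\big)$ for each $\ell$, which is a clean application of the Matrix Determinant Lemma (or the classical eigenvalue argument for a rank-one perturbation of a scalar matrix): its value is $(s - s_\ell)^{n_\ell - 1}\big(s - s_\ell + s_\ell\big) = (s - s_\ell)^{n_\ell - 1} \cdot s$. Since $s - s_\ell = \sum_{v_j \notin V_\ell} x_j$, multiplying over all $k$ blocks and reinstating the factors $\prod_{v_i \in V_\ell} x_i$ gives exactly $\big(\prod_{i=1}^n x_i\big) \cdot \prod_{\ell=1}^k \big(\sum_{v_j \notin V_\ell} x_j\big)^{n_\ell - 1} \cdot s^k$. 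Dividing both sides of the Lemma~\ref{rank-one-update} identity by $s^2$ yields the stated formula, with the exponent on $\sum_i x_i$ dropping from $k$ to $k - 2$. I do not anticipate a serious obstacle here; the only point requiring care is the bookkeeping that identifies $M$ as block diagonal and correctly tracks which sums are over $V_\ell$ versus its complement. One should also note in passing that the identity holds as a polynomial identity, so dividing by $\big(\sum_i x_i\big)^2$ is legitimate in the ring of rational functions (equivalently, both sides are polynomials agreeing on a Zariski-dense set).
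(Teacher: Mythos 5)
Your proposal is correct and follows essentially the same route as the paper: apply Lemma~\ref{rank-one-update} with $\mathbf{a}=\mathbf{b}=\mathbf{x}$, observe that $L+\mathbf{x}\mathbf{x}^T$ is block diagonal with one block per part, and evaluate each block's determinant via the Matrix Determinant Lemma. The only cosmetic difference is that you factor $x_i$ out of each row to reduce the block to $(s-s_\ell)I+\mathbf{1}\mathbf{x}_\ell^T$ before applying the lemma, whereas the paper applies it directly to $D_\ell+\mathbf{x}_\ell\mathbf{x}_\ell^T$; the computation is the same.
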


\begin{proof}
Once again, let $L$ denote the weighted Laplacian matrix of $G$, let $\mathbf{x} = (x_1,\ldots,x_n)^T$, and consider $L + \mathbf{x}\mathbf{x}^T$, which is a block diagonal matrix with blocks indexed by the sets $V_1, \ldots, V_k$. For each $1 \leq \ell \leq k$, let $\mathbf{x}_\ell$ be the $n_\ell \times 1$ vector of indeterminates corresponding to vertices in $V_\ell$. In other words, $\mathbf{x}_\ell$ is the orthogonal projection of $\mathbf{x}$ onto the subspace spanned by vertices in $V_\ell$. The $\ell^{\text{th}}$ diagonal block of $L + \mathbf{x}\mathbf{x}^T$ has the form $D_\ell + \mathbf{x}_\ell\mathbf{x}_\ell^T$, where $D_\ell$ is the diagonal matrix of weights corresponding to the vertices in $V_\ell$.  Specifically, for a vertex $v_i \in V_\ell$, the corresponding diagonal entry in $D_\ell$ is $\displaystyle x_i \sum_{v_j \notin V_\ell} x_j$. 

By the Matrix Determinant Lemma, 
\begin{eqnarray*}
\det(D_\ell + \mathbf{x}_\ell\mathbf{x}_\ell^T) &=& \det(D_\ell) \left( 1 + \mathbf{x}_\ell^T D_\ell^{-1} \mathbf{x}_\ell \right) \\
&\stackrel{(*)}{=}& \det(D_\ell) \left( 1 + \frac{\sum_{v_i \in V_\ell} x_i}{\sum_{v_j \notin V_\ell} x_j } \right) \\
&=& \det(D_\ell) \left( \frac{\sum_{i=1}^n x_i}{\sum_{v_j \notin V_\ell} x_j} \right) \\
&=& \Big(\prod_{v_i \in V_\ell}x_i\Big) \Big(\sum_{j \notin V_\ell}x_j\Big)^{n_\ell-1} \Big(\sum_{i=1}^n x_i\Big),
\end{eqnarray*}
where the second equality $(*)$ follows from the fact that $D_\ell^{-1}$ can be written as the diagonal matrix of inverse weights $x_i^{-1}$ for $v_i \in V_\ell$ multiplied by $(\sum_{v_j \notin V_\ell}x_j)^{-1}$. The result now follows from Lemma \ref{rank-one-update}, together with the fact that $\det(L+\mathbf{x}\mathbf{x}^T) = \prod_{\ell=1}^k \det(D_\ell + \mathbf{x}_\ell\mathbf{x}_\ell^T)$. 
\end{proof}

Setting $x_i = 1$ for all $1 \leq i \leq n$ in Theorem \ref{complete-multipartite} yields the number of spanning trees in $K_{n_1,\ldots,n_k}$, $$n^{k-2} \cdot \prod_{\ell=1}^k (n-n_\ell)^{n_\ell-1},$$ which has been reproved many times and is originally due to Lewis \cite{Lewis}.

%%%%%%%%%%%%%%%%%%%%%
%%%%%%%%%%%%%%%%%%%%%
\subsection{The Schur complement of a matrix}
%%%%%%%%%%%%%%%%%%%%%
%%%%%%%%%%%%%%%%%%%%%

We conclude this section by briefly reviewing the Schur complement of a matrix. Later in this paper, there will be instances in which we partition the vertices of a graph into disjoint subsets as $V(G) = V_1 \sqcup V_2$.  In such instances, the Laplacian matrix of $G$ can be decomposed into a block matrix of the form $$\left( \begin{array}{rr} A & B \\ C & D \end{array} \right),$$ where the first $|V_1|$ rows and columns correspond to the vertices in $V_1$ and the last $|V_2|$ rows and columns correspond to the vertices in $V_2$.  In this case, $A$ and $D$ are square matrices of sizes $|V_1| \times |V_1|$ and $|V_2| \times |V_2|$ respectively. Now suppose $M$ is any square matrix that can be decomposed into blocks $A, B, C, D$ as above with $A$ and $D$ square.  If $D$ is invertible, then the \textbf{Schur complement} of $D$ in $M$ is defined as $M/D:= A- BD^{-1}C$.  A fundamental reason for using Schur complements is the following result (see \cite[\S0.8.5]{Horn-Johnson}).

\begin{lemma} \label{schur-complement}
Let $M$ be a square matrix decomposed into blocks $A, B, C, D$ as above with $A$ and $D$ square and $D$ invertible.  Then $$\det(M) = \det(D) \cdot \det(A-BD^{-1}C).$$
\end{lemma}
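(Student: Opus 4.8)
The plan is to exhibit $M$ as a product of three block matrices whose determinants are transparent, and then invoke the multiplicativity of the determinant. Since $D$ is invertible, one checks by direct block multiplication that
$$
\begin{pmatrix} A & B \\ C & D \end{pmatrix}
=
\begin{pmatrix} I & BD^{-1} \\ 0 & I \end{pmatrix}
\begin{pmatrix} A - BD^{-1}C & 0 \\ 0 & D \end{pmatrix}
\begin{pmatrix} I & 0 \\ D^{-1}C & I \end{pmatrix},
$$
where each $I$ is an identity block and each $0$ a zero block of the appropriate size. Expanding the right-hand side and using $BD^{-1}D = B$ and $DD^{-1}C = C$ recovers the four blocks $A, B, C, D$ of $M$, so the factorization is valid. (An essentially equivalent and slightly lighter route is to left-multiply $M$ by $\bigl(\begin{smallmatrix} I & -BD^{-1} \\ 0 & I\end{smallmatrix}\bigr)$ to clear the block $B$, producing the block-lower-triangular matrix $\bigl(\begin{smallmatrix} A - BD^{-1}C & 0 \\ C & D\end{smallmatrix}\bigr)$, and then take determinants.)

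First I would record that the first and third factors are block-triangular with identity diagonal blocks, hence each has determinant $1$; this follows from cofactor expansion, or from the observation that each such matrix is a product of elementary matrices of determinant $1$. Next, the middle factor is block-diagonal, so its determinant is $\det(A - BD^{-1}C)\cdot\det(D)$, which one can justify by Laplace expansion along the first $|V_1|$ rows or by writing the block-diagonal matrix as $\mathrm{diag}(A-BD^{-1}C, I)\cdot\mathrm{diag}(I, D)$. Multiplicativity of the determinant then gives
$$
\det(M) = 1 \cdot \bigl(\det(A - BD^{-1}C)\cdot\det(D)\bigr) \cdot 1 = \det(D)\cdot\det(A - BD^{-1}C),
$$
as claimed.

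The only real obstacle here is bookkeeping: one must keep track of which identity and zero blocks carry which dimensions so that all the products are well-defined, and verify the factorization honestly. Beyond that, the argument uses nothing deeper than block matrix multiplication together with the standard facts that $\det$ is multiplicative and that the determinant of a block-triangular matrix is the product of the determinants of its diagonal blocks.
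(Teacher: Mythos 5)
Your proof is correct. The paper does not actually prove this lemma---it simply cites Horn and Johnson \cite[\S0.8.5]{Horn-Johnson}---and your block $LDU$ factorization
$$
M = \begin{pmatrix} I & BD^{-1} \\ 0 & I \end{pmatrix}
\begin{pmatrix} A - BD^{-1}C & 0 \\ 0 & D \end{pmatrix}
\begin{pmatrix} I & 0 \\ D^{-1}C & I \end{pmatrix}
$$
is exactly the standard textbook argument, with all the supporting facts (unit block-triangular factors have determinant $1$, block-diagonal determinants multiply) correctly identified and justified.
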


When $M = \begin{pmatrix} a & b \\ c & d \end{pmatrix}$ is a $2 \times 2$ matrix and $d \neq 0$, Lemma~\ref{schur-complement} simply says that $\det(M) = d \left(a - b \cdot \frac{1}{d} \cdot c\right),$ which is just an alternate way of writing the familiar formula for the determinant of a $2 \times 2$ matrix.

%%%%%%%%%%%%%%%%%%%%%
%%%%%%%%%%%%%%%%%%%%%
\section{Weighted spanning tree enumeration for Ferrers graphs} \label{section:ferrers}
%%%%%%%%%%%%%%%%%%%%%
%%%%%%%%%%%%%%%%%%%%%
In this section, we demonstrate the applicability of Lemmas \ref{rank-one-update} and \ref{schur-complement} for calculating the weighted spanning tree enumerators for Ferrers graphs.

A \textbf{partition} of a positive integer $z$ is an ordered list of positive integers whose sum is $z$.  For example, $(4,4,3,2,1)$ is a partition of $14$.  We write $\lambda = (\lambda_1, \ldots, \lambda_m)$ to denote the parts of the partition $\lambda$.  To any partition, there is an associated bipartite \textbf{Ferrers diagram}, which is a stack of left-justified boxes with $\lambda_1$ boxes in the first row, $\lambda_2$ boxes in the second row, and so on. To any Ferrers diagram there is an associated \textbf{Ferrers graph}, whose vertices are indexed by the rows and columns of the Ferrers diagram with an edge if there is a box in the corresponding position.  The Ferrers diagram and corresponding Ferrers graph associated to the partition $\lambda = (4,4,3,2,1)$ are shown in Figure \ref{example-ferrers}.
\begin{figure}[h]
\begin{center}
\begin{tabular}{>{\centering\arraybackslash}m{.48\textwidth}>{\centering\arraybackslash}m{.48\textwidth}}
\begin{tikzpicture}[scale=.6]
\draw (0,0) -- (4,0);
\draw (0,-1) -- (4,-1);
\draw (0,-2) -- (4,-2);
\draw (0,-3) -- (3,-3);
\draw (0,-4) -- (2,-4);
\draw (0,-5) -- (1,-5);

\draw (0,0) -- (0,-5);
\draw (1,0) -- (1,-5);
\draw (2,0) -- (2,-4);
\draw (3,0) -- (3,-3);
\draw (4,0) -- (4,-2);

\draw (-.5,-.5) node {$r_1$};
\draw (-.5,-1.5) node {$r_2$};
\draw (-.5,-2.5) node {$r_3$};
\draw (-.5,-3.5) node {$r_4$};
\draw (-.5,-4.5) node {$r_5$};

\draw (0.5,0.5) node {$c_1$};
\draw (1.5,0.5) node {$c_2$};
\draw (2.5,0.5) node {$c_3$};
\draw (3.5,0.5) node {$c_4$};
\end{tikzpicture}
&
\begin{tikzpicture}[scale=1.0]
\foreach \t in {(0,0), (1,0), (2,0), (3,0), (4,0), (.5,1.5), (1.5,1.5), (2.5,1.5), (3.5,1.5)}{
	\draw[fill=black] \t circle (.1);
}
\draw (0,0) node[anchor = north] {$r_1$};
\draw (1,0) node[anchor = north] {$r_2$}; 
\draw (2,0)  node[anchor = north] {$r_3$};
\draw (3,0)  node[anchor = north] {$r_4$};
\draw (4,0)  node[anchor = north] {$r_5$};
\draw (.5,1.5)  node[anchor = south] {$c_1$};
\draw (1.5,1.5) node[anchor = south] {$c_2$}; 
\draw (2.5,1.5) node[anchor = south] {$c_3$};
\draw (3.5,1.5) node[anchor = south] {$c_4$};

\draw (0,0) -- (.5,1.5);
\draw (0,0) -- (1.5,1.5);
\draw (0,0) -- (2.5,1.5);
\draw (0,0) -- (3.5,1.5);

\draw (1,0) -- (.5,1.5);
\draw (1,0) -- (1.5,1.5);
\draw (1,0) -- (2.5,1.5);
\draw (1,0) -- (3.5,1.5);

\draw (2,0) -- (.5,1.5);
\draw (2,0) -- (1.5,1.5);
\draw (2,0) -- (2.5,1.5);

\draw (3,0) -- (.5,1.5);
\draw (3,0) -- (1.5,1.5);

\draw (4,0) -- (.5,1.5);
\end{tikzpicture}
\end{tabular}
\end{center}
\caption{The Ferrers diagram (left) and Ferrers graph (right) corresponding to the partition $(4,4,3,2,1)$.}
\label{example-ferrers}
\end{figure}
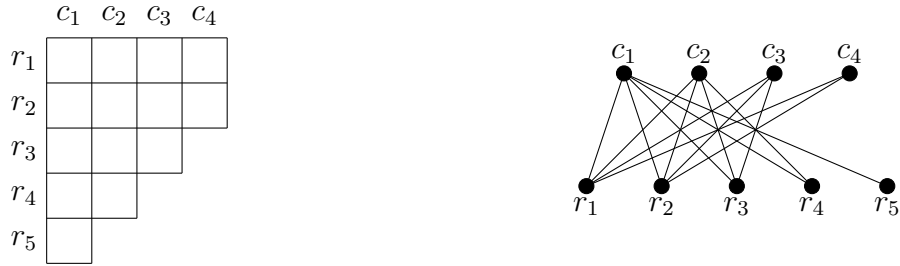
Equivalently, a Ferrers graph is a bipartite graph $G$ whose vertices can be partitioned as $R \sqcup C$ with $R = \{r_1,\ldots,r_m\}$ (corresponding to the rows of a Ferrers diagram) and $C = \{c_1,\ldots,c_n\}$ (corresponding to the columns of a Ferrers diagram) such that 
\begin{enumerate}
\item if $\{r_k,c_{\ell}\} \in E(G)$, then $\{r_i,c_j\} \in E(G)$ for any $i \leq k$ and $j \leq \ell$, and
\item $\{r_1,c_n\} \in E(G)$ and $\{r_m,c_1\} \in E(G)$.
\end{enumerate}

Suppose the edges of a Ferrers graph are weighted by $\omega_{r_i,c_j} = x_iy_j$.  Ehrenborg and van Willigenburg \cite[Theorem 2.1]{E-VW} used the theory of electrical networks to give a closed formula for the weighted spanning tree enumerator in this case. The result is most simply stated using the correspondence between Ferrers graphs and partitions.  If $\lambda = (\lambda_1, \lambda_2, \ldots, \lambda_m)$ is a partition, then its \textbf{conjugate partition} is $\lambda' = (\lambda_1', \lambda_2, \ldots, \lambda_n')$, where $\lambda_j'$ counts the number of parts $\lambda_i$ with $\lambda_i \geq j$ and $n = \lambda_1$ is the size of the largest part in $\lambda$.  In terms of Ferrers diagrams, $\lambda_i$ counts the number of boxes in the $i^{\text{th}}$ row and $\lambda'_j$ counts the number of boxes in the $j^{\text{th}}$ column.

\begin{theorem}\label{thm:ferrers}
Let $G$ be a Ferrers graph whose vertices are partitioned as $V(G) = R \sqcup C$ with $|R| = m$ and $|C| = n$.  Suppose $G$ corresponds to the partition $\lambda = (\lambda_1, \ldots, \lambda_m)$ whose conjugate partition is $\lambda' = (\lambda_1', \lambda_2', \ldots, \lambda_n')$. Let $x_1,\ldots,x_m,$ $y_1,\ldots,y_n$ be indeterminates and suppose the edges of $G$ are weighted by $\omega_{i,j} = x_iy_j$ for each $\{r_i,c_j\} \in E(G)$.  Then 
$$ 
\tau(G; \omega) = \left(\prod_{i = 1}^m x_i\right) \cdot \left(\prod_{j = 1}^n y_j\right) \cdot \left( \prod_{i=2}^m\Big(\sum\limits_{j=1}^{\lambda_i}y_j\Big)\right) \cdot \left(\prod_{j=2}^n\Big(\sum\limits_{i=1}^{\lambda'_j}x_i\Big)\right).
$$

\end{theorem}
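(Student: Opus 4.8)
The plan is to apply Lemma~\ref{rank-one-update} with the natural weight vector, and then use the Schur complement (Lemma~\ref{schur-complement}) to reduce the resulting determinant to something triangular. Order the vertices of $G$ as $r_1, \ldots, r_m, c_1, \ldots, c_n$, let $L$ be the weighted Laplacian, and set $\mathbf{z} = (x_1, \ldots, x_m, y_1, \ldots, y_n)^T$. Since $\omega_{r_i, c_j} = x_i y_j$, the off-diagonal entry of $L$ in the $(r_i, c_j)$ position is $-x_i y_j$, which is exactly cancelled by the corresponding entry of $\mathbf{z}\mathbf{z}^T$; likewise the $(r_i, r_j)$ and $(c_i, c_j)$ blocks of $L$ are zero (the graph is bipartite) and are \emph{filled in} by $\mathbf{z}\mathbf{z}^T$. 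So $M := L + \mathbf{z}\mathbf{z}^T$ is block-structured as $\left(\begin{smallmatrix} A & 0 \\ 0 & D \end{smallmatrix}\right)$ — wait, not quite: the $(R,R)$ block becomes $\mathbf{x}\mathbf{x}^T$ plus the diagonal of row-degrees, the $(C,C)$ block becomes $\mathbf{y}\mathbf{y}^T$ plus the diagonal of column-degrees, and the $(R,C)$ and $(C,R)$ blocks vanish entirely. Thus $M$ is genuinely block-diagonal, $\det M = \det A \cdot \det D$, where $A = \Lambda_R + \mathbf{x}\mathbf{x}^T$ with $\Lambda_R = \mathrm{diag}\big(x_i \sum_{j=1}^{\lambda_i} y_j\big)_{i=1}^m$, and $D = \Lambda_C + \mathbf{y}\mathbf{y}^T$ with $\Lambda_C = \mathrm{diag}\big(y_j \sum_{i=1}^{\lambda'_j} x_i\big)_{j=1}^n$. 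By Lemma~\ref{rank-one-update}, $\det M = \big(\sum x_i + \sum y_j\big)^2 \cdot \tau(G;\omega)$.

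Next I would evaluate $\det A$ and $\det D$ separately via the Matrix Determinant Lemma. For $\det A$: if every $\sum_{j=1}^{\lambda_i} y_j \neq 0$ then $\Lambda_R$ is invertible and
\[
\det A = \det(\Lambda_R)\Big(1 + \mathbf{x}^T \Lambda_R^{-1} \mathbf{x}\Big)
= \Big(\prod_{i=1}^m x_i \sum_{j=1}^{\lambda_i} y_j\Big)\Big(1 + \sum_{i=1}^m \frac{x_i}{\sum_{j=1}^{\lambda_i} y_j}\Big).
\]
Here the factor $\sum_{j=1}^{\lambda_1} y_j = \sum_{j=1}^n y_j$ appears, since $\lambda_1 = n$. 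I expect the bracketed sum, combined with the $i=1$ factor of the product, to telescope: $\big(\sum_{j=1}^n y_j\big)\big(1 + \sum_{i=1}^m x_i / \sum_{j=1}^{\lambda_i} y_j\big)$ does \emph{not} obviously collapse, so rather than forcing the algebra I would instead factor $\Lambda_R^{-1/2}$ out of both sides, or more cleanly observe that the quantity $\det(A)/\prod_{i=2}^m\big(\sum_{j=1}^{\lambda_i} y_j\big)$ equals $\big(\prod_{i=1}^m x_i\big)\big(\sum_{j=1}^n y_j + \sum_{i=2}^m x_i \prod_{\text{suitable}}\big)$ — this is the step where one must be careful. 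The cleanest route is: pull $\prod_{i=2}^m \big(\sum_{j=1}^{\lambda_i}y_j\big)$ out of $\det\Lambda_R$, leaving $\det A = \big(\prod_{i=1}^m x_i\big)\big(\prod_{i=2}^m \sum_{j=1}^{\lambda_i} y_j\big) \cdot Q_R$, where $Q_R = \big(\sum_{j=1}^n y_j\big) + \big(\sum_{j=1}^n y_j\big)\sum_{i=2}^m x_i/\sum_{j=1}^{\lambda_i}y_j + x_1$. So the main obstacle is verifying that $Q_R = \sum_i x_i + \sum_j y_j$, i.e. that $\big(\sum_{j=1}^n y_j\big)\sum_{i=2}^m \frac{x_i}{\sum_{j=1}^{\lambda_i} y_j} = \sum_{i=2}^m x_i$ — which is \emph{false} termwise but I suspect I have mis-set-up; the correct manipulation is to write $1 + \mathbf{x}^T\Lambda_R^{-1}\mathbf{x}$ over the common denominator $\prod_i \sum_{j=1}^{\lambda_i} y_j$ and show the numerator factors as $\big(\sum x_i + \sum y_j\big)\prod_{i=2}^m\big(\sum_{j=1}^{\lambda_i} y_j\big)$; this polynomial identity in the $x_i, y_j$ is the real content and should be provable by induction on $m$ using the nesting $\lambda_m \leq \cdots \leq \lambda_1 = n$.

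By the symmetric computation (swapping the roles of rows/columns, $x$'s/$y$'s, and $\lambda$/$\lambda'$), I get
\[
\det D = \Big(\prod_{j=1}^n y_j\Big)\Big(\prod_{j=2}^n \sum_{i=1}^{\lambda'_j} x_i\Big) \cdot \Big(\sum_{i=1}^m x_i + \sum_{j=1}^n y_j\Big).
\]
Multiplying, $\det M = \det A \cdot \det D = \big(\prod x_i\big)\big(\prod y_j\big)\big(\prod_{i=2}^m \sum_{j=1}^{\lambda_i} y_j\big)\big(\prod_{j=2}^n \sum_{i=1}^{\lambda'_j} x_i\big)\big(\sum x_i + \sum y_j\big)^2$. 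Dividing by $\big(\sum x_i + \sum y_j\big)^2$ as Lemma~\ref{rank-one-update} permits yields exactly the claimed formula for $\tau(G;\omega)$. To handle the genericity assumption (that the various subsums are nonzero), I would note that both sides of the identity are polynomials in the indeterminates $x_i, y_j$, so it suffices to prove the identity on the Zariski-dense open set where all the relevant denominators are nonzero — or, even more simply, treat the $x_i, y_j$ as formal indeterminates throughout, in which case $\Lambda_R$ and $\Lambda_C$ are invertible over the fraction field and no genericity caveat is needed. The main obstacle, to reiterate, is the polynomial identity certifying that $\det A$ has the stated product form; everything else is bookkeeping about which entries of $L + \mathbf{z}\mathbf{z}^T$ vanish.
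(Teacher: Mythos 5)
There is a genuine gap, and it sits exactly at the point where you hesitate. With $\mathbf{z} = (x_1,\ldots,x_m,y_1,\ldots,y_n)^T$, the $(r_i,c_j)$ entry of $M = L + \mathbf{z}\mathbf{z}^T$ is $-x_iy_j + x_iy_j = 0$ only when $\{r_i,c_j\}$ \emph{is} an edge; when it is not an edge, the Laplacian contributes $0$ and $\mathbf{z}\mathbf{z}^T$ contributes $x_iy_j \neq 0$. So the off-diagonal blocks of $M$ vanish only for the complete bipartite graph, and for a general Ferrers graph $M$ is \emph{not} block-diagonal, so $\det M \neq \det A \cdot \det D$. This is why the identity you try to force at the end is, as you yourself suspected, false: $\big(\sum_{j=1}^{n}y_j\big)\sum_{i=2}^{m}x_i\big/\sum_{j=1}^{\lambda_i}y_j = \sum_{i=2}^m x_i$ holds only when every $\lambda_i = n$. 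The failure is not a mis-setup of the Matrix Determinant Lemma computation; it is the symptom of the discarded cross terms. (A quick check: for $\lambda = (2,1)$ the Ferrers graph is a tree with a single spanning tree of weight $x_1^2x_2y_1^2y_2$, and your block-diagonal formula does not reproduce this.)

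The paper's proof repairs exactly this by choosing the rank-one update \emph{asymmetrically}: it takes $\mathbf{x}$ supported on $R$ and $\mathbf{y}$ supported on $C$ and forms $L + \mathbf{y}\mathbf{x}^T$, which modifies only the $(C,R)$ block. The resulting matrix keeps a nonzero $(R,C)$ block $B$ (entries $-x_iy_j$ at edges) and acquires a $(C,R)$ block $B^{op}$ supported on \emph{non}-edges; one then takes the Schur complement $D_R - BD_C^{-1}B^{op}$ and uses the nested-neighborhood property $N(r_1)\supseteq\cdots\supseteq N(r_m)$ of Ferrers graphs to show that each entry of $BD_C^{-1}B^{op}$ is a sum over $N(r_i)\setminus N(r_j)$, hence zero when $i \ge j$. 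The Schur complement is therefore upper triangular with the same diagonal as $D_R$, and the theorem follows from Lemmas \ref{rank-one-update} and \ref{schur-complement}, with the factors $\sum_{j=1}^{\lambda_1}y_j$ and $\sum_{i=1}^{\lambda_1'}x_i$ cancelling against the prefactor from Lemma \ref{rank-one-update}. If you want to salvage your symmetric update, you would still have to control the non-edge cross terms, which amounts to the same Schur-complement argument with messier bookkeeping.
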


\begin{proof}
Let $L$ be the weighted Laplacian matrix of $G$, let $\mathbf{y}$ be the $(m+n) \times 1$ vector given by $\mathbf{y}(r_i) = 0$ for all $1 \leq i \leq m$ and $\mathbf{y}(c_j) = y_j$ for all $1 \leq j \leq n$, and let $\mathbf{x}$ be the $(m+n) \times 1$ vector given by $\mathbf{x}(r_i) = x_i$ for all $1 \leq i \leq m$ and $\mathbf{x}(c_j) = 0$ for all $1 \leq j \leq n$.  (One can think of $\mathbf{y}$ as a weighted indicator vector for $C$ in $\mathbb{R}^V$ and $\mathbf{x}$ as a weighted indicator vector for $R$ in $\mathbb{R}^V$.)  The diagonal entries of $L$ are $x_i \cdot (y_1+\cdots+y_{\lambda_i})$ for each vertex $r_i \in R$ and $y_j \cdot (x_1+\cdots+x_{\lambda'_j})$ for each vertex $c_j \in C$.  We decompose $M:=L + \mathbf{y}\mathbf{x}^T$ as a block matrix 
$$M = \begin{pmatrix} D_R &  B \\ B^{op} &  D_C\end{pmatrix},$$ where 
\begin{itemize}
\item $D_R$ is the diagonal $m \times m$ matrix of weighted degrees of vertices in $R$,
\item $D_C$ is the diagonal $n \times n$ matrix of weighted degrees of vertices in $C$,
\item $B$ is the $m \times n$ matrix with entries $-x_iy_j$ if $\{r_i,c_j\} \in E(G)$ and $0$ otherwise, and 
\item $B^{op} = B^T + (y_1,\ldots,y_n)^T(x_1,\ldots,x_m)$ is the $n \times m$ matrix with entries $x_iy_j$ if $\{r_i,c_j\} \notin E(G)$ and $0$ otherwise. 
\end{itemize}

Let $S:= D_R-BD_C^{-1}B^{op}$ be the Schur complement of the block $D_C$ in $M$. The entries of $S$ can be explicitly computed as follows: Consider rows $r_i$ and $r_j$ that are not necessarily distinct.  The entry $(BD_C^{-1}B^{op})(r_i,r_j)$ is equal to the inner product of the $r_i$-row of $B$ with the $r_j$-column of $D_C^{-1}B^{op}$.  This entry equals $$-\sum\frac{x_ix_jy_k^2}{y_k(x_1+\cdots+x_{\lambda'_k})},$$ where the sum is over all vertices $c_k$ such that $\{r_i,c_k\} \in E(G)$ and $\{r_j,c_k\} \notin E(G)$.  In other words, the sum is over all $c_k \in N(r_i) \setminus N(r_j)$, where $N(\cdot)$ denotes the neighborhood of a vertex. 
Because $G$ is a Ferrers graph, $N(r_1) \supseteq N(r_2) \supseteq \cdots \supseteq N(r_m)$.  This means $(BD_C^{-1}B^{op})(r_i,r_j) = 0$ when $i \geq j$.  Thus $S$ is upper triangular and its diagonal entries are the same as those in $D_R$.  

The result now follows from Lemmas \ref{rank-one-update} and \ref{schur-complement} because 
\begin{eqnarray*}
\sum\limits_{v \in V(G)} \mathbf{y}(v) &=& 
%\sum\limits_{k=1}^{m+n} a_k &=& 
\sum\limits_{j=1}^n y_j = \sum\limits_{j=1}^{\lambda_1} y_j, \\
\sum\limits_{v \in V(G)} \mathbf{x}(v) &=& 
%\sum\limits_{k=1}^{m+n} b_k &=& 
\sum\limits_{i=1}^m x_i = \sum\limits_{i=1}^{\lambda'_1} x_i, \\
\det(D_C) &=& \prod_{j=1}^n \Big(y_j \sum\limits_{i=1}^{\lambda'_j}x_i\Big), \\
\det(S) = \det(D_R) &=& \prod_{i=1}^m \Big(x_i\sum\limits_{j=1}^{\lambda_i} y_j\Big), \text{ and} \\
\det(L+\mathbf{y}\mathbf{x}^T) &=& \det(D_C) \det(S).
\end{eqnarray*}
\end{proof}

%%%%%%%%%%%%%%%%%%%%%
%%%%%%%%%%%%%%%%%%%%%
\section{Weighted spanning tree enumeration for threshold graphs} \label{section:threshold}
%%%%%%%%%%%%%%%%%%%%%
%%%%%%%%%%%%%%%%%%%%%

In this section, we demonstrate the applicability of Lemmas \ref{rank-one-update} and \ref{schur-complement} for calculating the spanning tree enumerators of threshold graphs.

Threshold graphs have many equivalent definitions \cite{Mahadev-Peled}, but the following one will be most relevant for us:  A graph $G$ on $n$ vertices is a \textbf{threshold graph} if its vertices can be ordered $v_1,\ldots,v_n$ in such a way that if $\{v_j,v_\ell\} \in E(G)$ for some $1 \leq j < \ell \leq n$, then $\{v_i,v_\ell\} \in E(G)$ for all $i < j$ and $\{v_j,v_k\} \in E(G)$ for all $k < \ell$. An example threshold graph is illustrated in Figure~\ref{example-threshold}.

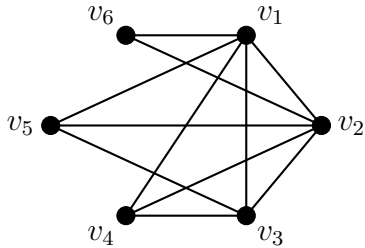
\begin{figure}[h]
\begin{center}
\scalebox{1}{
\begin{tikzpicture}
\draw [fill] (0,2.4) circle [radius=0.12];
\node [above left] at (0,2.4) {$v_6$};
\draw [fill] (1.6,2.4) circle [radius=0.12];
\node [above right] at (1.6,2.4) {$v_1$};
\draw [fill] (2.6,1.2) circle [radius=0.12];
\node [right] at (2.6,1.2) {$ \, v_2$};
\draw [fill] (1.6,0) circle [radius=0.12];
\node [below right] at (1.6,0) {$v_3$};
\draw [fill] (0,0) circle [radius=0.12];
\node [below left] at (0,0) {$v_4$};
\draw [fill] (-1,1.2) circle [radius=0.12];
\node [left] at (-1,1.2) {$v_5 \, $};
\draw [thick] (1.6,2.4) -- (2.6,1.2);
\draw [thick] (1.6,2.4) -- (1.6,0);
\draw [thick] (1.6,2.4) -- (0,0);
\draw [thick] (1.6,2.4) -- (-1,1.2);
\draw [thick] (1.6,2.4) -- (0,2.4);
\draw [thick] (2.6,1.2) -- (1.6,0);
\draw [thick] (2.6,1.2) -- (0,0);
\draw [thick] (2.6,1.2) -- (-1,1.2);
\draw [thick] (2.6,1.2) -- (0,2.4);
\draw [thick] (1.6,0) -- (0,0);
\draw [thick] (1.6,0) -- (-1,1.2);

\end{tikzpicture}}
\end{center}
\caption{An example threshold graph on six vertices.}
\label{example-threshold}
\end{figure} 

If $G$ is a threshold graph, we consider a special vertex $v_t$, where $t$ is the largest index such that $\{v_i,v_j\} \in E(G)$ for all $1 \leq i < j \leq t$. This special vertex in the graph in Figure \ref{example-threshold} is $t = 4$.  For readers who are familiar with the definition that threshold graphs are defined starting from an initial vertex and inductively adding dominating or isolated vertices, $v_{t}$ is the initial vertex, the vertices $v_1,\ldots,v_{t-1}$ are the dominating vertices in $G$ (where $v_{t-1}$ is the first dominating vertex, $v_{t-2}$ the second, and so on), and $v_{t+1}, \ldots, v_n$ are the isolated vertices (where $v_{t+1}$ is the first isolated vertex, $v_{t+2}$ the second, and so on).  Because the vertex degrees in a threshold graph weakly decrease in order, $\deg(v_1) \geq \deg(v_2) \geq \cdots \geq \deg(v_n)$, the degree sequence of a threshold graph is a partition.  

Martin and Reiner \cite{Martin-Reiner} consider a weighting on the edges of a threshold graph in which $\omega_{i,j} = x_{\min(i,j)}y_{\max(i,j)}$ for indeterminates $x_1,\ldots,x_n,y_1,\ldots,y_n$. Because of the natural ordering on the vertices of a threshold graph, the weight of a spanning tree $T$ has a nice interpretation as $$\prod_{i=1}^n x_i^{\indeg_T(v_i)} y_i^{\outdeg_T(v_i)},$$ where the edges of $G$ are oriented from the higher indexed vertex to the lower indexed vertex. With this, Martin and Reiner give the following result \cite[Theorem 4, Eq. (11)]{Martin-Reiner}. 

\begin{theorem}\label{thm:threshold}
Let $G$ be a connected threshold graph with $V(G) = \{v_1,\ldots,v_n\}$ and degree sequence $\delta = (\delta_1,\ldots,\delta_n)$, let $v_t$ be the special vertex defined above, and consider the weighting of edges in $G$ given by $\omega_{i,j} = x_{\min(i,j)}y_{\max(i,j)}$ with indeterminates $x_1,\ldots,x_n,y_1,\ldots,y_n$.  Then 
$$
\tau(G;\omega) = \sum_{T \in \mathcal{ST}(G)} \prod_{i=1}^n x_i^{\indeg_T(v_i)}y_i^{\outdeg_T(v_i)} = x_1 \Big(\prod_{i=t}^n y_i\Big) \Big(\prod_{j=2}^{t-1} f_j\Big) \Big( \prod_{j=t+1}^n g_j \Big) ,
$$
where $$f_j := y_j\sum_{i=1}^jx_i + x_j \sum_{k=j+1}^{1+\delta_j}y_k \qquad \text{ and } \qquad g_j := \sum_{i=1}^{\delta_j}x_i.$$
\end{theorem}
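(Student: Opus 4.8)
The plan is to imitate the proof of Theorem~\ref{thm:ferrers}: perform a rank-one update to the weighted Laplacian $L$ of $G$, take a Schur complement to reduce to a triangular matrix, and then apply Lemmas~\ref{rank-one-update} and~\ref{schur-complement}. The structural facts I would use are that a connected threshold graph is a split graph of a very rigid type: writing $\delta=(\delta_1,\dots,\delta_n)$ for the degree sequence, the vertices $V_1:=\{v_1,\dots,v_{t-1}\}$ form a clique, the vertices $V_2:=\{v_t,\dots,v_n\}$ form an independent set, $\delta_1=n-1$ and $\delta_t=t-1$, and for $i<j$ the pair $\{v_i,v_j\}$ is an edge exactly when $i\le\delta_j$. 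In particular every neighbor of a vertex $v_j$ with $j\ge t$ has index less than $j$, so $\deg(v_j;\omega)=y_j\sum_{i=1}^{\delta_j}x_i$ there, and for $v_i\in V_1$ the sets $N(v_i)\cap V_2$ are nested just as the row-neighborhoods were in the Ferrers setting.

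First I would set up the rank-one update. Let $\mathbf{b}=(y_1,\dots,y_n)^T$ and let $\mathbf{a}$ be the vector whose entry at $v_i$ is $x_i$ when $v_i\in V_1$ and $0$ when $v_i\in V_2$; I would then examine $M:=L+\mathbf{a}\mathbf{b}^T$ as a block matrix with $V_1$-block $A$, $V_2$-block $D$, block $B$ with rows in $V_1$ and columns in $V_2$, and block $C$ with rows in $V_2$ and columns in $V_1$. Since $\mathbf{a}$ vanishes on $V_2$, the update does not change $D$, and because $V_2$ is independent, $D$ is the diagonal matrix whose $v_j$-entry is $\deg(v_j;\omega)=y_j\sum_{i=1}^{\delta_j}x_i$; thus $D$ is invertible with $\det D=\prod_{j=t}^n\big(y_j\sum_{i=1}^{\delta_j}x_i\big)$, which already supplies the factors $\prod_{i=t}^n y_i$ and $\prod_{j=t+1}^n g_j$, along with one extra factor $\sum_{i=1}^{t-1}x_i$ from the $v_t$-term. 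For the block $A$ the key point is that $V_1$ is a clique: for $i<i'$ in $V_1$ the edge $\{v_i,v_{i'}\}$ contributes $-\omega_{i,i'}=-x_iy_{i'}$ to $L$, which is cancelled exactly by the $x_iy_{i'}$ coming from $\mathbf{a}\mathbf{b}^T$, so $A$ is lower triangular; its $v_j$-diagonal entry is $\deg(v_j;\omega)+x_jy_j$, which equals $x_1(y_1+\dots+y_n)$ when $j=1$ and, by a short computation using $N(v_j)=\{v_1,\dots,v_{\delta_j+1}\}\setminus\{v_j\}$, equals $f_j$ when $2\le j\le t-1$.

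Next I would form the Schur complement $S:=A-BD^{-1}C$ of $D$ in $M$ and show it is again lower triangular with the same diagonal as $A$. As in the Ferrers proof, $(BD^{-1}C)(v_i,v_{i'})$ for $v_i,v_{i'}\in V_1$ is a sum over the vertices $v_j\in V_2$ that are adjacent to $v_{i'}$ but not to $v_i$, the summand being a multiple of $-x_ix_{i'}y_j/\sum_{k=1}^{\delta_j}x_k$; such a $v_j$ would require $i'\le\delta_j<i$, which is impossible unless $i'<i$. Hence this entry vanishes whenever $i\le i'$, so $S$ is lower triangular and $\det S=x_1(y_1+\dots+y_n)\prod_{j=2}^{t-1}f_j$. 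Lemma~\ref{schur-complement} now gives $\det M=\det D\cdot\det S$, while Lemma~\ref{rank-one-update} gives $\det M=\big(\sum_{i=1}^{t-1}x_i\big)\big(\sum_{k=1}^n y_k\big)\tau(G;\omega)$; equating these and cancelling the common factors $\sum_{i=1}^{t-1}x_i=\sum_{i=1}^{\delta_t}x_i$ and $\sum_{k=1}^n y_k$ leaves exactly $\tau(G;\omega)=x_1\big(\prod_{i=t}^n y_i\big)\big(\prod_{j=2}^{t-1}f_j\big)\big(\prod_{j=t+1}^n g_j\big)$. The description of $\tau(G;\omega)$ as $\sum_T\prod_i x_i^{\indeg_T(v_i)}y_i^{\outdeg_T(v_i)}$ is merely the restatement of the weighting given before the theorem.

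I expect the main obstacle to be the Schur-complement computation — verifying that the bipartite "staircase" between $V_1$ and $V_2$ really forces $BD^{-1}C$ to be strictly lower triangular, which is the threshold analogue of the nesting $N(r_1)\supseteq\cdots\supseteq N(r_m)$ used in the proof of Theorem~\ref{thm:ferrers}, now with the extra wrinkle that the clique block $A$ is not diagonal to start with and only becomes triangular after the update. A secondary point needing care is the bookkeeping that makes the extraneous factors $\sum_{i=1}^{t-1}x_i$ and $\sum_{k=1}^n y_k$ produced by Lemma~\ref{rank-one-update} cancel precisely against factors hidden in $\det D$ (the $v_t$-term) and $\det S$ (the $v_1$-term).
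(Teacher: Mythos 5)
Your proposal is correct and follows essentially the same route as the paper: a rank-one update $L+\mathbf{a}\mathbf{b}^T$ with $\mathbf{a}$ supported on the clique part, a block decomposition into clique and independent-set blocks, and the Schur complement of the diagonal block, made triangular by the nested neighborhoods, combined with Lemmas~\ref{rank-one-update} and~\ref{schur-complement}. The only (immaterial) differences are that you place $v_t$ in the independent part rather than the clique part, so the factor $g_t=\sum_{i=1}^{t-1}x_i$ cancels out of $\det(D)$ instead of appearing as $f_t$ in the Schur complement, and your update is the transpose of the paper's, yielding lower- rather than upper-triangular matrices.
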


Before we prove this theorem, a few comments are in order.  First, setting $x_i = y_i = 1$ for $1 \leq i \leq n$ tells us that the number of spanning trees in a connected threshold graph $G$ is given by 
$$
\prod_{i=2}^{t-1}(\deg(v_i)+1) \prod_{i=t+1}^n \deg(v_i),
$$
a result that was originally shown by Merris \cite{Merris}. In the particular case that $t=n$, $G$ is the complete graph on $n$ vertices and we recover Cayley's formula: the number of spanning trees in $K_n$ is $\prod_{i=2}^{n-1}n = n^{n-2}$.  Moreover, when $t=n$ and $y_i = x_i$ for all $i$, we get $f_i = x_i(x_1 + \cdots + x_n)$ for all $i$ and recover the Cayley-Pr\"ufer Theorem. 

In their paper, Martin and Reiner describe this formula in terms of an index $s$, which is the side length of the Durfree square in the partition $\delta$.  To translate between their formula and ours, substitute $s = t-1$.

\begin{proof}[Proof of Theorem \ref{thm:threshold}]
Let $L$ be the weighted Laplacian matrix of $G$.  We can partition $L$ as a block matrix of the form 
$$
L = \begin{pmatrix} A & B \\ B^T & D \end{pmatrix},
$$
where the rows and columns of $A$ are indexed by vertices $v_1, \ldots, v_t$ and the rows and columns of $D$ are indexed by vertices $v_{t+1},\ldots,v_n$.

Consider the $n \times 1$ column vectors $\mathbf{y} = (y_1,\ldots,y_n)^T$ and $\mathbf{x} = (x_1,\ldots,x_t,0,\ldots,0)^T$. Because the entry in row $i$ and column $j$ of $\mathbf{y}\mathbf{x}^T$ is $x_jy_i$ if $1 \leq j \leq t$ and zero otherwise, $L+\mathbf{y}\mathbf{x}^T$ has an analogous partition into blocks of the form
$$
L+\mathbf{y}\mathbf{x}^T = \begin{pmatrix} A' & B \\ B^{op} & D \end{pmatrix}.
$$  
As in the proof of Theorem \ref{thm:ferrers}, we can explicitly describe the entries within each block and then compute $\det(L+\mathbf{y}\mathbf{x}^T)$ by taking the Schur complement of block $D$.  
The $j^{\text{th}}$ diagonal entry in block $A$ is $$A(j,j) = \sum_{i=1}^{j-1} x_iy_j + \sum_{k=j+1}^{1+\delta_j}x_jy_k = y_j \sum_{i=1}^{j-1}x_i + x_j \sum_{k=j+1}^{1+\delta_j}y_k.$$ This is because vertex $v_j$ has degree $\delta_j$ and, since $G$ is threshold, its neighbors are the first $\delta_j$ vertices in order other than $v_j$ itself; the weight of an edge $\{v_i,v_j\}$ with $i < j$ is $x_iy_j$; and the weight of an edge $\{v_j,v_k\}$ with $j < k$ is $x_jy_k$. The off-diagonal entries in $A$ are $A(i,j) = -x_iy_j$ if $i<j$ and $A(i,j) = -x_jy_i$ if $i>j$.  Therefore, when adding $\mathbf{y}\mathbf{x}^T$ to $L$, the entries below the diagonal of $A$ vanish.  Thus, $A'$ is upper triangular with diagonal entries $A'(j,j) = x_jy_j + A(j,j) = f_j$.  

Block $B$ records incidences among vertices in $\{v_1,\ldots,v_t\}$ and $\{v_{t+1},\ldots,v_n\}$.  Since the rows in $B$ are indexed by the former set and the columns in $B$ are indexed by the latter, the entries in $B$ are given by $B(i,j) = -x_iy_j$ if $\{v_i,v_j\} \in E(G)$ and zero otherwise. Because of our choice of $\mathbf{y}$ and $\mathbf{x}$, $B^{op}$ is the $(n-t) \times t$ matrix $$B^{op} = B^T + (y_{t+1},\ldots,y_n)^T(x_1,\ldots,x_t).$$  Therefore, $B^{op}(i,j) = x_jy_i$ if $\{v_i,v_j\} \notin E(G)$ and zero otherwise.  
 
 Finally, because the first $t$ vertices of $G$ span a clique and the first $t+1$ vertices do not, we see that $\{v_t,v_{t+1}\} \notin E(G)$.  This implies that $\{v_i,v_j\} \notin E(G)$ for all $t \leq i,j \leq n$.  Therefore, $D$ is a diagonal matrix with diagonal entries $$D(j,j) = y_j \sum_{i=1}^{\delta_j} x_i = y_jg_j$$ for $t+1 \leq j \leq n.$  
 
% From this it follows that for any $1 \leq i,j \leq t$, 
% $$
% (BD^{-1}B^{op})(v_i,v_j) = \sum_{\substack{t+1 \leq k \leq n \\ v_k \in N(v_i) \setminus N(v_j)}}\frac{x_ix_jy_k^2}{y_k(x_1 + \cdots + x_{\lambda_k})}.
% $$

As in the proof of Theorem \ref{thm:ferrers}, for any $1 \leq i,j \leq t$, the entries of $(BD^{-1}B^{op})(i,j)$ can be written as a weighted sum over vertices $v_k \in N(v_i) \setminus N(v_j)$ such that $t+1 \leq k \leq n$. Because $G$ is threshold, this set of vertices is empty whenever $i \geq j$, so $BD^{-1}B^{op}$ is strictly upper triangular.  Because $A'$ is upper triangular, this means $A'-BD^{-1}B^{op}$ is upper triangular and its diagonal is the same as the diagonal of $A'$. Therefore, by Lemma~\ref{rank-one-update}, 
 
 \begin{eqnarray*}
\Big(\sum\limits_{i=1}^n y_i\Big) \cdot \Big(\sum\limits_{i=1}^t x_i\Big) \cdot \tau(G;\omega) &=& \det(L + \mathbf{y}\mathbf{x}^T)\\
 &=& \det(D) \det(A'-BD^{-1}B^{op}) \\
&=& \Big(\prod_{j=t+1}^n y_jg_j\Big) \Big(\prod_{j=1}^t f_j \Big) \\
 \end{eqnarray*}
 
Since $G$ is connected, $v_n$ is not isolated, which means $\{v_1,v_n\} \in E(G)$.  Because $G$ is threshold, this means $v_1$ is adjacent to every vertex in $G$, so $$f_1 = x_1y_1 + x_1 \sum_{i=2}^n y_i = x_1\sum_{i=1}^n y_i.$$ Similarly, because $v_t$ is only adjacent to $v_1, \ldots, v_{t-1}$, $\delta_{t} = t-1$ and $f_t = y_t\sum_{i=1}^t x_i$. The result follows. 
\end{proof}

%%%%%%%%%%%%%%%%%%%%%
%%%%%%%%%%%%%%%%%%%%%
\section*{Acknowledgments}
%%%%%%%%%%%%%%%%%%%%%
%%%%%%%%%%%%%%%%%%%%%

Steven Klee's research was supported by NSF grant DMS-1600048.  Matthew Stamps is grateful to Isabella Novik and the Department of Mathematics at the University of Washington for hosting him during the time this research was conducted.

%\section*{References}
\bibliographystyle{plain}
\bibliography{bipartitebib}

\begin{thebibliography}{10}

\bibitem{Clark}
Lane Clark.
\newblock On the enumeration of spanning trees of the complete multipartite
  graph.
\newblock {\em Bull. Inst. Combin. Appl.}, 38:50--60, 2003.

\bibitem{E-VW}
Richard Ehrenborg and Stephanie van Willigenburg.
\newblock Enumerative properties of {F}errers graphs.
\newblock {\em Discrete Comput. Geom.}, 32(4):481--492, 2004.

\bibitem{Horn-Johnson}
Roger~A. Horn and Charles~R. Johnson.
\newblock {\em Matrix analysis}.
\newblock Cambridge University Press, Cambridge, second edition, 2013.

\bibitem{Kirchhoff}
G.~Kirchhoff.
\newblock {\"U}ber die {A}ufl\"osung der {G}leichungen, auf welche man bei der
  {U}ntersuchung der linearen {V}erteilung galvanischer {S}tr\"ome gefuhrt
  wird.
\newblock {\em Ann. Phys. Chem.}, 72:497--508, 1847.

\bibitem{Klee-Stamps-Monthly}
S.~Klee and M.~T. Stamps.
\newblock Linear algebraic techniques for spanning tree enumeration.
\newblock {\em Amer. Math. Monthly}, page to appear, 2019.
\newblock (\texttt{arXiv:1903.04973}).

\bibitem{Lewis}
Richard~P. Lewis.
\newblock The number of spanning trees of a complete multipartite graph.
\newblock {\em Discrete Math.}, 197/198:537--541, 1999.
\newblock 16th British Combinatorial Conference (London, 1997).

\bibitem{Mahadev-Peled}
N.~V.~R. Mahadev and U.~N. Peled.
\newblock {\em Threshold graphs and related topics}, volume~56 of {\em Annals
  of Discrete Mathematics}.
\newblock North-Holland Publishing Co., Amsterdam, 1995.

\bibitem{Martin-Reiner}
Jeremy~L. Martin and Victor Reiner.
\newblock Factorization of some weighted spanning tree enumerators.
\newblock {\em J. Combin. Theory Ser. A}, 104(2):287--300, 2003.

\bibitem{Maxwell}
James~Clerk Maxwell.
\newblock {\em A treatise on electricity and magnetism. {V}ol. 1}.
\newblock Oxford Classic Texts in the Physical Sciences. The Clarendon Press,
  Oxford University Press, New York, 1998.
\newblock With prefaces by W. D. Niven and J. J. Thomson, Reprint of the third
  (1891) edition.

\bibitem{Merris}
Russell Merris.
\newblock Degree maximal graphs are {L}aplacian integral.
\newblock {\em Linear Algebra Appl.}, 199:381--389, 1994.

\bibitem{Moon}
J.~W. Moon.
\newblock {\em Counting labelled trees}, volume 1969 of {\em From lectures
  delivered to the Twelfth Biennial Seminar of the Canadian Mathematical
  Congress (Vancouver)}.
\newblock Canadian Mathematical Congress, Montreal, Que., 1970.

\bibitem{Temperley}
H.~N.~V. Temperley.
\newblock On the mutual cancellation of cluster integrals in {M}ayer's fugacity
  series.
\newblock {\em Proc. Phys. Soc.}, 83:3--16, 1964.

\end{thebibliography}

\end{document}